\theoremstyle{plain}
\newtheorem{theorem}{Theorem}[section]
\newtheorem{lemma}[theorem]{Lemma}
\newtheorem{corollary}[theorem]{Corollary}
\newtheorem{proposition}[theorem]{Proposition}
\theoremstyle{definition}
\newtheorem{definition}[theorem]{Definition}
\newtheorem{example}[theorem]{Example}
\newtheorem{remark}[theorem]{Remark}
\def\arA{\mathbf A}
\def\arB{\mathbf B}
\def\arC{\mathbf C}
\def\Z{\mathbb Z}
\def\Q{\mathbb Q}
\def\myangle#1{\langle #1\rangle}
\def\len{\operatorname{len}}
\def\infs{\inf{\!}_ s}
\def\sups{\sup{\!}_s}
\def\lens{\len{\!}_s}
\def\t{\operatorname{\it t}}
\def\INF{\t_{\inf}}
\def\SUP{\t_{\sup}}
\def\LEN{\t_{\len}}
\let\epsilon\varepsilon
\begin{document}

\title{Notes on periodic elements of Garside groups}

\author{Eon-Kyung Lee and Sang-Jin Lee}
\address{Department of Mathematics, Sejong University,
    Seoul, 143-747, Korea}
\email{eonkyung@sejong.ac.kr}
\address{Department of Mathematics, Konkuk University,
    Seoul, 143-701, Korea}
\email{sangjin@konkuk.ac.kr}
\date{\today}

\begin{abstract}
Let $G$ be a Garside group with Garside element $\Delta$.
An element $g$  in $G$ is said to be \emph{periodic}
if some power of $g$ lies in the cyclic group generated by $\Delta$.
This paper shows the following.
(i) The periodicity of an element does not depend on
the choice of a particular Garside structure
if and only if the center of $G$ is cyclic.
(ii) If $g^k=\Delta^{ka}$ for some nonzero integer $k$,
then $g$ is conjugate to $\Delta^a$.
(iii)  Every finite subgroup of the quotient group
$G/\langle \Delta^m\rangle$ is cyclic,
where $\Delta^m$ is the minimal positive central power of $\Delta$.

\medskip\noindent
{\em Keywords\/}:
Garside group; periodic element; braid group.\\
{\em 2000 Mathematics Subject Classification\/}: Primary 20F36; Secondary 20F10\\
\end{abstract}

\maketitle

\section{Introduction}

The class of Garside groups, first introduced by Dehornoy and Paris~\cite{DP99},
is a lattice-theoretic generalization of braid groups
and Artin groups of finite type.
Recently, there were several results on periodic elements of Garside groups
such as the characterization of finite subgroups
of the central quotient of finite type Artin groups by Bestvina~\cite{Bes99}
and its extension to Garside groups by Charney, Meier and Whittlesey~\cite{CMW04};
a new algorithm for solving the conjugacy search problem for periodic braids
by Birman, Gebhardt and Gonz\'alez-Meneses~\cite{BGG07};
the characterization of periodic elements in the braid groups of
complex reflection groups by Bessis~\cite{Bes06a}.

This paper is interested in some questions on
periodic elements in Garside groups.
Let $G$ be a Garside group with Garside element $\Delta$.
An element $g$ in $G$ is said to be \emph{periodic}
(with respect to $\Delta$)
if either $g$ is the identity or it is commensurable with $\Delta$,
that is, $g^k$ and $\Delta^\ell$ are conjugate
for some nonzero integers $k$ and $\ell$~\cite{Bes06b, BGG08}.

\subsection{Periodicity and Garside structure}
The periodicity of an element in a Garside group
generally depends on the choice of a particular Garside structure,
more precisely on the Garside element.
A Garside group may admit more than one Garside structure.
For example, the $n$-strand braid group $B_n$
has two standard Garside structures:
the classical and dual Garside structures~\cite{Thu92,BKL98}.
Therefore it is natural to ask the following question.

\begin{quote}
When is the periodicity of an element
independent of the choice of a particular Garside structure?
\end{quote}

It is easy to see that the periodicity does not depend on the choice of a Garside structure
if and only if any two Garside elements
(arising from different Garside structures)
are commensurable,
and that this happens if the center $Z(G)$ is cyclic.
We show that the converse is also true.

\medskip\noindent
\textbf{Theorem~\ref{thm:Gar_struc}.}\ \ \em
Let $G$ be a Garside group.
Then $Z(G)$ is cyclic if and only if
any pair of Garside elements of $G$ are commensurable.
\upshape
\medskip

The irreducible Artin groups of finite type and, more generally,
the braid groups of irreducible well-generated complex reflection groups
are Garside groups with cyclic center~\cite{Bes06a}.
Therefore, in these groups, an element is periodic (with respect to a Garside element)
if and only if it has a central power.
However, not all Garside groups have cyclic center.
A typical example is $\Z^\ell$ for $\ell\ge 2$.

\subsection{$p/q$-periodicity, uniqueness of roots
and a question of Bessis}

First, we introduce a definition of Bessis in~\cite{Bes06b}:
an element $g\in G$ is $p/q$-periodic if $g^q=\Delta^p$ for $p\in\Z$ and $q\in\Z_{\ge 1}$.

Note that
$g^k=\Delta^\ell$ for some $k\in\Z_{\ge 1}$ and $\ell\in\Z$
if and only if
$g^q$ is conjugate to $\Delta^p$ for some $q\in\Z_{\ge 1}$ and $p\in\Z$,
because $\Delta^m$ is central for some $m\ge 1$.
Using this equivalence, we define the notion of $p/q$-periodicity
in a slightly different way.

\begin{definition}
An element $g\in G$ is said to be \emph{$p/q$-periodic}
for $p\in\Z$ and $q\in\Z_{\ge 1}$ if $g^q$ is conjugate to $\Delta^p$ and
$q$ is the smallest positive integer such that $g^q$
is conjugate to a power of $\Delta$.
\end{definition}

In the above definition, the $p/q$-periodicity \emph{a priori}
depends on the actual $p$ and $q$
and not just on the rational number $p/q$ because it may happen that
$g^{kq}$ is conjugate to $\Delta^{kp}$ for some $k\ge 2$
but $g^q$ is not conjugate to $\Delta^p$.
Motivated by this observation, we show the following.

\medskip\noindent
\textbf{Theorem~\ref{thm:unique}.}\ \ \em
Let $G$ be a Garside group with Garside element $\Delta$, and let $g\in G$
and $a, b, k \in\Z_{\neq 0}$.
\begin{itemize}
\item[(i)]
If\/ $g^{kb}$ is conjugate to $\Delta^{ka}$,
then $g^b$ is conjugate to $\Delta^a$.

\item[(ii)]
If\/ each of $g^{a}$ and $g^{b}$ is conjugate to a power of $\Delta$,
then so is $g^{\gcd(a,b)}$.
\end{itemize}
\upshape
\medskip

By this theorem, the term `$p/q$-periodic' contains that
$p$ and $q$ are coprime.

The above theorem is a sort of uniqueness property of roots up to conjugacy.
On this property, stronger results are known for some specific groups.
Let $g$ and $h$ be elements of a group $G$ such that
\begin{equation}\label{eq:root}
g^k=h^k\qquad\mbox{for some $k\ne 0$}.
\end{equation}
If $G$ is the pure $n$-braid group $P_n$, then
$g=h$ by Bardakov~\cite{Bar92}.
(This also follows from the biorderability of the pure braid groups
by Kim and Rolfsen~\cite{KR03}.)
If $G$ is the $n$-braid group $B_n$, then $g$ and $h$ are conjugate
by Gonz\'alez-Meneses~\cite{Gon03}.
If $G$ is the Artin group of type $\arB$, $\tilde \arA$ or $\tilde \arC$,
then $g$ and $h$ are conjugate~\cite{LL10}.
If $G$ is the braid group of a well-generated complex reflection group
and $g$ and $h$ are periodic elements,
then $g$ and $h$ are conjugate by Bessis~\cite{Bes06a}.
For a study of roots in mapping class groups, see~\cite{BP09}.

Theorem~\ref{thm:unique} shows that if $G$ is a Garside group
and $h$ is a power of $\Delta$, then (\ref{eq:root}) implies that
$g$ and $h$ are conjugate.
In Garside groups, even for periodic elements, it is hard to obtain a result
stronger than Theorem~\ref{thm:unique}.
For every $k\ge 2$, there is a Garside group with periodic elements $g$ and $h$
such that $g^k=h^k$ but $g$ and $h$ are not conjugate.
(See Example~\ref{ex:nonunique}.)

\medskip
The following is a question of Bessis~\cite[Question~4]{Bes06b}.

\smallskip\noindent\textbf{Question.}\ \
Let $G$ be a Garside group with Garside element $\Delta$.
Let $g\in G$ be a periodic element with respect to $\Delta$.
Does $G$ admit a Garside structure with Garside element $g$?
\smallskip

The above question is answered almost positively in the case of the braid group $B_n$:
each periodic element in $B_n$ is conjugate to
a power of one of the particular braids $\delta$ and $\epsilon$
which are the Garside elements in the dual Garside structures of $B_n$
and $A(\mathbf B_{n-1})$, respectively, where $A(\mathbf B_{n-1})$ denotes
the Artin group of type $\mathbf B_{n-1}$ viewed as a subgroup of $B_n$.
In~\cite{Bes06b}, Bessis showed that the above question is answered almost positively
in Garside groupoid setting.

To a Garside group $G$ with an affirmative answer to the above question,
the idea of Birman, Gebhardt and Gonz\'alez-Meneses in~\cite{BGG07}
can possibly be applied.
Precisely, in order to solve the conjugacy search problem for periodic elements
$g$ and $h$ of $G$,
it suffices to find a Garside structure with Garside element $g$.

Using Theorem~\ref{thm:unique}, we give a negative answer to the above question:
there is a Garside group $G$ with a periodic element $g$
such that there is no Garside structure on $G$ with Garside element $g$.
(See Example~\ref{ex:nonunique}.)

\subsection{Finite subgroups of the quotient group $G_\Delta$}
Let $m$ be the smallest positive integer such that $\Delta^m$ is central in $G$.
Let $G_\Delta$ be the quotient $G/\langle\Delta^m\rangle$,
where $\myangle{\Delta^m}$ is the cyclic group generated by $\Delta^m$.
For an element $g\in G$, let $\bar g$ denote the image of $g$
under the natural projection from $G$ to $G_\Delta$.
Hence, an element $g\in G$ is periodic if and only if
$\bar g$ has a finite order in $G_\Delta$.

About finite subgroups of $G_\Delta$, the following facts are known.

\begin{itemize}
\item[(i)]
If $G$ is an Artin group of finite type,
then every finite subgroup of\/ $G_\Delta$ is cyclic.

\item[(ii)]
If $G$ is a Garside group, then every finite subgroup of\/ $G_\Delta$
is abelian of rank at most 2.
\end{itemize}

The first was proved by Bestvina~\cite[Theorem 4.5]{Bes99}
and the second by
Charney, Meier and Whittlesey~\cite[Corollary 6.8]{CMW04}
following the arguments of Bestvina.
For the full statement of their results, see \S\ref{sec:central quotient}.

We show that Bestvina's result holds also for all Garside groups.

\medskip\noindent
\textbf{Theorem~\ref{thm:cyclic}.}\ \ \em
Let $G$ be a Garside group with Garside element $\Delta$.
Then every finite subgroup of\/ $G_\Delta$ is cyclic.
\upshape
\medskip

Our proof uses the result of Charney, Meier and Whittlesey.
Actually we prove that every finite abelian subgroup of $G_\Delta$ is cyclic.
Because every finite subgroup of $G_\Delta$ is abelian,
this implies the above theorem.

\section{Review of Garside groups}

This section describes basic definitions and properties of Garside groups.
See~\cite{DP99,Deh02} for details.

For a monoid $M$, let $1$ denote the identity element.
An element $a\in M\setminus \{ 1\}$ is called an \emph{atom} if
$a=bc$ for $b,c\in M$ implies either $b=1$ or $c=1$.
For $a\in M$, let $\Vert a\Vert$ be the supremum
of the lengths of all expressions of
$a$ in terms of atoms. The monoid $M$ is said to be \emph{atomic}
if it is generated by its atoms and $\Vert a\Vert<\infty$
for any element $a$ of $M$.
In an atomic monoid $M$, there are partial orders $\le_L$ and $\le_R$:
$a\le_L b$ if $ac=b$ for some $c\in M$;
$a\le_R b$ if $ca=b$ for some $c\in M$.

\begin{definition}
An atomic monoid $M$ is called a \emph{Garside monoid} if
\begin{enumerate}
\item[(i)] $M$ is left and right cancellative;
\item[(ii)] $(M,\le_L)$ and $(M,\le_R)$ are lattices;
\item[(iii)] $M$ contains an element $\Delta$, called a
\emph{Garside element}, satisfying the following:\\
(a) for each $a\in M$, $a\le_L\Delta$ if and only if $a\le_R\Delta$;\\
(b) the set $\{a\in M: a \le_L\Delta\}$ is finite and generates $M$.
\end{enumerate}
\end{definition}

An element $a$ of $M$ is called a \emph{simple element} if $a\le_L\Delta$.
A \emph{Garside group} is defined as the group of fractions
of a Garside monoid.
When $M$ is a Garside monoid and $G$ is the group of fractions of $M$,
we identify the elements of $M$ and their images in $G$
and call them \emph{positive elements} of $G$.
$M$ is called the \emph{positive monoid} of $G$,
often denoted by $G^+$.
The triple $(G, G^+, \Delta)$ is called a
\emph{Garside structure} on $G$.
We remark that a given group $G$ may
admit more than one Garside structure.

Let $\tau : G\to G$ be the inner automorphism of $G$
defined by $\tau(g)=\Delta^{-1}g\Delta$ for $g\in G$.
It is known that some power of $\tau$ is the identity,
equivalently, some power of $\Delta$ is central.

The partial orders $\le_L$ and $\le_R$, and thus the lattice structures
in the positive monoid $G^+$ can be extended to the Garside group $G$.
For $g, h\in G$, $g\le_L h$ (resp. $g\le_R h$) means $g^{-1}h\in G^+$
(resp. $hg^{-1}\in G^+$).

For $g\in G$, there are integers $r\le s$ such that
$\Delta^r\le_L g\le_L\Delta^s$.
Hence, the invariants
$\inf(g)=\max\{r\in\Z:\Delta^r\le_L g\}$,
$\sup(g)=\min\{s\in\Z:g\le_L \Delta^s\}$ and
$\len(g)=\sup(g)-\inf(g)$
are well-defined.

For $g\in G$, we denote its conjugacy class $\{ h^{-1}gh : h\in G\}$ by $[g]$.
Define $\infs(g)=\max\{\inf(h):h\in [g]\}$,
$\sups(g)=\min\{\sup(h):h\in [g]\}$ and $\lens(g)=\sups(g)-\infs(g)$.

For every element $g\in G$, the following limits are well-defined:
$$
\INF(g)=\lim_{n\to\infty}\frac{\inf(g^n)}n;\quad
\SUP(g)=\lim_{n\to\infty}\frac{\sup(g^n)}n;\quad
\LEN(g)=\lim_{n\to\infty}\frac{\len(g^n)}n.
$$
These limits were introduced in~\cite{LL07}
in studying translation numbers in Garside groups.
In this paper we will exploit the following properties.

\begin{proposition}[{\cite{LL07,LL08}}]\label{prop:tran}
For $g, h\in G$,
\begin{enumerate}
\item[(i)]
$\INF(h^{-1}gh)=\INF(g)$ and $\SUP(h^{-1}gh)=\SUP(g)$;

\item[(ii)]
$\INF(g^n)= n\cdot\INF(g)$ and $\SUP(g^n)= n\cdot\SUP(g)$
for all integers $n\ge 1$;

\item[(iii)]
$\infs(g)=\lfloor \INF(g)\rfloor$ and $\sups(g)=\lceil \SUP(g)\rceil$;

\item[(iv)]
$\INF(g)$ and $\SUP(g)$ are rational of the form $p/q$,
where $p$ and $q$ are coprime integers and
$1\le q\le\Vert\Delta\Vert$.
\end{enumerate}
\end{proposition}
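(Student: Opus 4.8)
The plan is to rest everything on a short list of elementary facts valid in any Garside group: the superadditivity $\inf(ab)\ge\inf(a)+\inf(b)$ and subadditivity $\sup(ab)\le\sup(a)+\sup(b)$ of the invariants, the $\tau$-invariance $\inf(\tau(a))=\inf(a)$, $\sup(\tau(a))=\sup(a)$, and the consequences $\inf(a^n)\ge n\inf(a)$, $\sup(a^n)\le n\sup(a)$, which upon dividing by $n$ give $\inf(g)\le\INF(g)$ and $\SUP(g)\le\sup(g)$. With these, part (ii) is immediate: since $(g^n)^k=g^{nk}$, the defining sequence for $\INF(g^n)$ is $n$ times a subsequence of the defining sequence for $\INF(g)$, so $\INF(g^n)=n\,\INF(g)$, and dually for $\SUP$. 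For part (i) I would apply superadditivity twice to $(h^{-1}gh)^n=h^{-1}g^nh$: it gives both $\inf(h^{-1}g^nh)\ge\inf(g^n)+\inf(h)+\inf(h^{-1})$ and, by regrouping $g^n=h\,(h^{-1}g^nh)\,h^{-1}$, the reverse bound $\inf(h^{-1}g^nh)\le\inf(g^n)-\inf(h)-\inf(h^{-1})$. Thus $\inf(h^{-1}g^nh)$ differs from $\inf(g^n)$ by the $n$-independent constant $\inf(h)+\inf(h^{-1})$; dividing by $n$ and passing to the limit yields $\INF(h^{-1}gh)=\INF(g)$, and the argument for $\SUP$ is identical.

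For part (iii) each equality splits into an easy inequality and a hard one. The easy direction bounds the stable invariant by the asymptotic one: for any conjugate $h$ of $g$ I have $\inf(h)\le\INF(h)=\INF(g)$, using $\inf\le\INF$ and part (i); maximizing over the conjugacy class gives $\infs(g)\le\INF(g)$, and since $\infs(g)\in\Z$ this improves to $\infs(g)\le\lfloor\INF(g)\rfloor$. Dually $\sups(g)\ge\lceil\SUP(g)\rceil$. What remains is the reverse direction, which is precisely the assertion that there exists a conjugate of $g$ whose infimum equals $\lfloor\INF(g)\rfloor$ (and one whose supremum equals $\lceil\SUP(g)\rceil$). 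I would obtain these remaining halves together with part (iv) from a single structural analysis, described next; in particular the subtle integer case, where $\INF(g)$ might a priori equal $\infs(g)+1$, is settled only once such a realizing conjugate is produced.

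The engine for both part (iv) and the hard half of part (iii) is the eventually periodic behaviour of the left normal forms of the powers of a super summit representative. First I would replace $g$ by a representative $h$ in its super summit set, so that $\inf(h)=\infs(g)$ and $\sup(h)=\sups(g)$, and study the left normal forms of $h,h^2,h^3,\dots$. The crucial input is finiteness: the set $\{a:a\le_L\Delta\}$ of simple elements is finite, and left-weightedness is a condition on consecutive pairs, so the normal forms of the powers are encoded by walks in a finite graph. A pigeonhole argument then forces the governing data to become periodic, producing integers $p$ and $q$ with $1\le q\le\Vert\Delta\Vert$ and a recurrence $\inf(h^{\,n+q})=\inf(h^{\,n})+p$ for all large $n$. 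This recurrence yields $\INF(g)=\INF(h)=p/q$, so $\INF(g)$ is rational; writing it in lowest terms only decreases the denominator, giving the bound $q\le\Vert\Delta\Vert$ of part (iv), and the same analysis for the right normal form and $\sup$ handles $\SUP(g)$. Moreover the periodic pattern exhibits, after cycling $h$, a conjugate whose infimum is exactly $\lfloor\INF(g)\rfloor$, which is the reverse inequality needed in part (iii) and simultaneously rules out the integer exceptional case.

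The main obstacle is exactly this periodicity step. The formal manipulations behind parts (i) and (ii) and the easy halves of (iii) use nothing beyond (sub/super)additivity, but converting the asymptotic quantity $\INF(g)$ into the arithmetic statement that it is rational with denominator at most $\Vert\Delta\Vert$, and into the geometric statement that its floor is attained by an honest conjugate, requires genuine control of the cycling dynamics on the finite super summit set. The delicate points there are to show that the relevant state of the normal form really does recur with a period bounded by $\Vert\Delta\Vert$ rather than merely by the number of simple elements, and to check that the recurrence for $\inf$ transfers from the powers $h^n$ back to a single conjugate of $g$; these are where I expect the bulk of the work to lie.
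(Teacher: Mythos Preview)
The paper does not prove this proposition at all: it is quoted from the authors' earlier papers \cite{LL07,LL08} and is followed immediately by Section~3 with no argument given. So there is no ``paper's own proof'' against which to compare your proposal.

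As for the proposal itself: your treatment of (i) and (ii) via super/subadditivity and the bounded-difference trick is correct and standard. You also correctly isolate where the real work lies, namely the hard halves of (iii) and the rationality/denominator bound of (iv), and you correctly see that these require genuine control of the cycling dynamics rather than soft limit arguments. The honest acknowledgement that bounding the period by $\Vert\Delta\Vert$ (as opposed to the total number of simple elements) is the delicate step is exactly right; that is the crux of \cite{LL07}. One refinement worth noting: in the cited papers the argument is organized not around an eventual recurrence $\inf(h^{n+q})=\inf(h^n)+p$ for large $n$, but around passing to a yet smaller conjugacy-invariant subset (the \emph{stable} super summit set) on which one has the exact identity $\inf(h^n)=n\inf(h)$ for all $n\ge 1$; from that, both $\infs(g)=\lfloor\INF(g)\rfloor$ and the denominator bound drop out without a separate ``integer exceptional case'' analysis. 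Your outline would still go through, but the stable super summit formulation is cleaner and avoids the transfer step you flag at the end.
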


\section{Periodicity and Garside structure}\label{sec:Garsid structure}

For a group $G$, let $Z(G)$ denote the center of $G$.
The following is the main result of this section.

\begin{theorem}\label{thm:Gar_struc}
Let $G$ be a Garside group.
Then $Z(G)$ is cyclic if and only if
any pair of Garside elements of $G$ are commensurable.
\end{theorem}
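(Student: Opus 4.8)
The plan is to prove the two implications separately; one of them is routine and the other carries the work. For the routine direction, suppose $Z(G)$ is cyclic and let $m\ge 1$ be minimal with $\Delta^m$ central. If $\Delta'$ is any Garside element of $G$, then some power $\Delta'^{\,m'}$ is central. Since $Z(G)$ contains the infinite-order element $\Delta^m$, it is infinite cyclic, say $Z(G)=\langle\zeta\rangle$; writing $\Delta^m=\zeta^a$ and $\Delta'^{\,m'}=\zeta^b$ with $a,b\neq 0$ gives $\Delta^{mb}=\zeta^{ab}=\Delta'^{\,m'a}$ with nonzero exponents, so $\Delta$ and $\Delta'$ are commensurable.

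For the converse, assume that every pair of Garside elements of $G$ is commensurable. I will show that every $z\in Z(G)$ satisfies $z^N\in\langle\Delta^m\rangle$ for the single integer $N:=\operatorname{lcm}\{1,2,\dots,\Vert\Delta\Vert\}$; granting this, the map $Z(G)\to\langle\Delta^m\rangle$, $z\mapsto z^N$, is a homomorphism (because $Z(G)$ is abelian) and is injective because Garside groups are torsion-free, so $Z(G)$ embeds in the infinite cyclic group $\langle\Delta^m\rangle$ and is therefore cyclic. The engine of the argument is a construction of new Garside structures: for $z\in Z(G)$, choose a large multiple $k$ of $m$ so that $\Delta^k z\in G^+$ and $\Delta\le_L\Delta^k z$ (possible since $\inf(z)$ is a finite integer, so $\Delta^{-\inf(z)}z\in G^+$ and $\Delta^{\,k+\inf(z)}\in G^+$ once $k$ is large). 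Then $(G,G^+,\Delta^k z)$ is \emph{again a Garside structure on $G$}: the positive monoid is unchanged, and $\Delta^k z$ is a legitimate Garside element for it because it is central — so $\le_L$ and $\le_R$ agree below it — while its set of left divisors is finite (being contained in the finite set of left divisors of a high power of $\Delta$) and generates $G^+$ (it already contains all simple elements of the original structure, as $\Delta\le_L\Delta^k z$). If $\Delta^k z=1$ then $z=\Delta^{-k}\in\langle\Delta^m\rangle$ and the claim is immediate for that $z$, so we may assume $\Delta^k z\neq 1$.

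Now fix $z\in Z(G)\setminus\{1\}$. By hypothesis $\Delta^k z$ and $\Delta$ are commensurable, so $(\Delta^k z)^c$ is conjugate to $\Delta^d$ for some nonzero $c,d$, which we may take with $c\ge 1$. Raising to the $m$-th power, $(\Delta^k z)^{cm}=\Delta^{kcm}z^{cm}$ is central and conjugate to $\Delta^{dm}$, hence equals it, so $z^{cm}=\Delta^e$ with $e:=(d-kc)m$ and $cm\ge 1$. Write $\INF(z)=p/q$ in lowest terms, so $1\le q\le\Vert\Delta\Vert$ by Proposition~\ref{prop:tran}(iv). Comparing translation numbers in $z^{cm}=\Delta^e$ gives $cm\,p/q=e$, hence $q\mid cm$, say $cm=qs$ with $s\ge 1$ and $e=ps$; then $z^{qs}=(\Delta^p)^s$, and since $z^q$ is central it commutes with $\Delta^p$, so $(z^q\Delta^{-p})^s=1$, whence $z^q=\Delta^p$ by torsion-freeness. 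In particular $\Delta^p=z^q$ is central, so $m\mid p$ and therefore $z^N=(z^q)^{N/q}=\Delta^{pN/q}\in\langle\Delta^m\rangle$, which proves the claim.

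I expect the main obstacle to be the passage from a non-uniform to a uniform exponent. Commensurability only hands us, for each central $z$, \emph{some} a priori unbounded power that lands in $\langle\Delta^m\rangle$, and a naive argument at that stage shows merely that $Z(G)$ has rank $1$, which does not give cyclicity; it is precisely the denominator bound $q\le\Vert\Delta\Vert$ of Proposition~\ref{prop:tran}(iv), together with the extraction $z^q=\Delta^p$ via torsion-freeness, that forces a single $N$ to work for all $z$ simultaneously. A secondary technical point deserving care is the verification that $\Delta^k z$ really satisfies the Garside-element axioms for $G^+$, which rests on the standard finiteness properties of Garside monoids.
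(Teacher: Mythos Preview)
Your proof is correct. The forward direction and the key construction in the converse---manufacturing a new Garside element $\Delta^k z$ from a central $z$ and then invoking the commensurability hypothesis---match the paper's argument (your verification of the Garside-element axioms is the content of the paper's Lemma~\ref{lem:Gar_struc}). Where you diverge is in the final step of the converse: the paper, having shown every nonidentity $z\in Z(G)$ is commensurable with $\Delta$, concludes by combining torsion-freeness (Dehornoy) with the Charney--Meier--Whittlesey theorem that every abelian subgroup of a Garside group is finitely generated, so $Z(G)\cong\Z^r$ with all elements commensurable, forcing $r=1$. You instead avoid the CMW input entirely by using the denominator bound of Proposition~\ref{prop:tran}(iv) to extract a \emph{uniform} exponent $N=\operatorname{lcm}\{1,\dots,\Vert\Delta\Vert\}$ with $z^N\in\langle\Delta^m\rangle$ for every central $z$, and then embed $Z(G)$ in $\langle\Delta^m\rangle$ via $z\mapsto z^N$. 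Your route trades one external result (CMW's geometric finiteness theorem) for another piece of machinery that the paper develops anyway (the translation-number bounds), and it has the mild bonus of yielding an explicit, effective exponent; the paper's route is shorter once CMW is on the table. Both are valid, and your self-diagnosis of the obstacle---that rank~$1$ alone does not give cyclicity without either finite generation or a uniform bound---is exactly the point of difference.
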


It is easy to see that
the periodicity of an element in $G$ does not depend on
the choice of a particular Garside element
if and only if any pair of Garside elements of $G$ are commensurable.
Hence we have the following.

\begin{corollary}\label{cor:Gar_struc}
Let $G$ be a Garside group.
The periodicity of an element of $G$ does not depend on the choice of a particular
Garside element if and only if $Z(G)$ is cyclic.
\end{corollary}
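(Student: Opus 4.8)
The plan is to deduce the corollary from Theorem~\ref{thm:Gar_struc} together with the elementary observation, already flagged in the text, that periodicity is independent of the choice of Garside element precisely when any two Garside elements of $G$ are commensurable. Thus the real task is to verify this auxiliary equivalence by unwinding definitions; the corollary then follows by chaining it with Theorem~\ref{thm:Gar_struc}.

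For the backward implication of the auxiliary equivalence, I would assume every pair of Garside elements of $G$ is commensurable and fix a nontrivial element $g$ that is periodic with respect to some Garside element $\Delta_1$ (the case $g=1$ being immediate). Then $g^k$ is conjugate to $\Delta_1^\ell$ for some nonzero $k,\ell$. Given another Garside element $\Delta_2$, commensurability gives that $\Delta_1^m$ is conjugate to $\Delta_2^n$ for some nonzero $m,n$; conjugating the $\ell$-th power of this relation and composing it with the $m$-th power of the previous relation yields that $g^{km}$ is conjugate to $\Delta_2^{n\ell}$, with $km\neq 0$ and $n\ell\neq 0$. Hence $g$ is periodic with respect to $\Delta_2$, and by symmetry the two notions of periodicity coincide on all of $G$.

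For the forward implication, I would note that each Garside element $\Delta_1$ is periodic with respect to itself, since $\Delta_1^1$ is (trivially) conjugate to $\Delta_1^1$. So if periodicity does not depend on the chosen Garside element, then $\Delta_1$ is periodic with respect to any other Garside element $\Delta_2$; that is, $\Delta_1^k$ is conjugate to $\Delta_2^\ell$ for some nonzero $k,\ell$, which is exactly the assertion that $\Delta_1$ and $\Delta_2$ are commensurable. Combining the two implications with Theorem~\ref{thm:Gar_struc} completes the argument.

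The proof is almost entirely formal: the only point requiring care is bookkeeping of the nonzero integers when composing conjugacy relations, together with the observation that both ``commensurable'' and ``periodic'' are defined up to conjugacy of powers rather than equality, so the chaining must be carried out by conjugating whole powers. The substantive content---the equivalence between cyclicity of $Z(G)$ and commensurability of all Garside elements---is supplied by Theorem~\ref{thm:Gar_struc}, which is assumed here, so I expect no serious obstacle in the corollary itself.
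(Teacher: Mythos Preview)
Your proposal is correct and follows precisely the route the paper takes: the paper states that it is ``easy to see'' that independence of periodicity from the Garside element is equivalent to commensurability of all Garside elements, and then derives the corollary directly from Theorem~\ref{thm:Gar_struc}. You simply spell out that easy equivalence in detail, so your argument is a fleshed-out version of the paper's one-line deduction.
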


\smallskip
We prove Theorem~\ref{thm:Gar_struc} by using the following lemma.
For $g\in G$, let
$L(g)=\{ a\in G^{+} : a\le_L g \}$ and $R(g)=\{ a\in G^{+} : a\le_R g \}$.

\begin{lemma}\label{lem:Gar_struc}
Let $(G, G^+, \Delta)$ be a Garside structure on a group $G$.
\begin{itemize}
\item[(i)]
Let $c$ be a positive element in $Z(G)$.
Then $L(c)=R(c)$.

\item[(ii)]
Let $c$ be a positive element in $Z(G)$
such that $\Delta\le_L c$.
Then $c$ is a Garside element, that is,
$L(c)=R(c)$ and $L(c)$ generates the positive monoid $G^+$.
\end{itemize}
\end{lemma}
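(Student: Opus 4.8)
The plan is to prove part (i) first and then bootstrap to part (ii). For part (i), I want to show $L(c)=R(c)$ whenever $c\in Z(G)\cap G^+$. The key observation is that left- and right-divisibility by a \emph{central} element are conjugate notions. Concretely, suppose $a\le_L c$, so $c=ab$ for some $b\in G^+$. Since $c$ is central, $c=ab=ba'$ where $a'=b^{-1}ab$; but I need $a$ itself (not a conjugate of it) to right-divide $c$. So instead I argue as follows: from $c=ab$ and centrality, $ac=ca=abc$... this is circular. The correct route: write $c=ab$ with $a,b\in G^+$; then $ca^{-1}=ab a^{-1}$, and since $c$ is central, $ca^{-1}=a^{-1}c=a^{-1}ab=b$, so $b=a^{-1}c a \cdot a^{-1}$... let me restate cleanly. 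If $c=ab$ then $a^{-1}c=b\in G^+$, i.e. $a\le_L c$; and $ca^{-1}=cb^{-1}... $. The cleanest formulation: $a\le_L c\iff a^{-1}c\in G^+$, and $a\le_R c\iff ca^{-1}\in G^+$. When $c$ is central, $a^{-1}c=ca^{-1}$, so these two conditions are literally identical. Hence $L(c)=R(c)$ — this is essentially immediate and requires no lattice theory at all.

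For part (ii), I assume additionally $\Delta\le_L c$, and I must show $L(c)$ generates $G^+$. Since $\Delta\le_L c$, every simple element (every $s$ with $s\le_L\Delta$) satisfies $s\le_L c$, so $L(c)$ contains all simples. But the simples generate $G^+$ by axiom (iii)(b) of the Garside monoid definition, so $L(c)$ certainly generates $G^+$. Combined with $L(c)=R(c)$ from part (i), and the fact that $L(c)$ is finite — here I need that $L(c)$ is finite, which follows because $c$ has a well-defined length $\len(c)=\sup(c)-\inf(c)$ and any $a\le_L c$ satisfies $\inf(a)\ge \inf(\mathbf 1)=0$ and $\sup(a)\le\sup(c)$, and there are only finitely many positive elements bounded above by a fixed power of $\Delta$ (equivalently, $L(c)\subseteq L(\Delta^{\sup(c)})$ which is finite since it embeds into a product of finitely many copies of the simple-element set). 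One also needs that $(G,\le_L)$ and $(G,\le_R)$ restricted to $G^+$ are still lattices with $c$ as top element of $L(c)$; but $c$ being central and $\Delta\le_L c$ means $c$ absorbs $\Delta$, and the lattice axioms for the Garside structure $(G,G^+,\Delta)$ transfer: $\{a\in G^+:a\le_L c\}$ is a finite generating set closed under the relevant meets/joins because $c$ dominates $\Delta$. That is precisely the content of "$c$ is a Garside element."

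The main obstacle will be verifying the lattice condition, axiom (ii), for the candidate Garside structure with Garside element $c$ — that is, confirming that the ambient lattice structure of $G^+$ inherited from $(G,G^+,\Delta)$ genuinely makes $c$ a Garside element rather than merely a central element above $\Delta$. I expect this to follow routinely: the positive monoid $G^+$ is unchanged, so $(G^+,\le_L)$ and $(G^+,\le_R)$ are the same lattices as before; the only new requirement is (iii), namely $L(c)=R(c)$ (done in part (i)) and finiteness plus generation of $L(c)$ (done above). So in fact the statement "$c$ is a Garside element" should reduce cleanly to the two verified facts, and the lemma will follow without serious further work. I would also remark that the hypothesis $\Delta\le_L c$ can be arranged for any nonidentity positive central $c$ by replacing $c$ with $c^N$ for suitable $N$, since $\SUP(c)>0$ forces $\sup(c^N)\to\infty$, hence eventually $\Delta\le_L c^N$; this is the form in which part (ii) will be applied in the proof of Theorem~\ref{thm:Gar_struc}.
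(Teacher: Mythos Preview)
Your proof of the lemma itself is correct and matches the paper's approach: for (i) you use centrality to identify $a^{-1}c$ with $ca^{-1}$ (the paper phrases this as $c=ab\Rightarrow c=ba$, which is the same observation), and for (ii) you use $L(\Delta)\subset L(c)$ together with the fact that the simples generate $G^+$, exactly as the paper does. Your additional checks on finiteness of $L(c)$ and the lattice axioms are correct but not needed, since the lemma (via the ``that is'' clause) only asks for $L(c)=R(c)$ and generation, the ambient monoid $G^+$ being unchanged.

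One caution about your closing remark: the claim that $\Delta\le_L c^N$ eventually holds because $\sup(c^N)\to\infty$ is not right --- $\Delta\le_L c^N$ is governed by $\inf(c^N)\ge 1$, not by $\sup$, and for a general nonidentity positive central $c$ it is not a priori clear that $\inf(c^N)\to\infty$ (indeed, proving central elements are commensurable with $\Delta$ is precisely the goal of Theorem~\ref{thm:Gar_struc}, so you cannot assume $\LEN(c)=0$). The paper sidesteps this by taking $c=\Delta^k g$ with $k$ large and $k\equiv 0\bmod m$, which forces $\inf(c)\ge 1$ directly; that is the form in which part~(ii) is actually applied.
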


\begin{proof}
(i)\ \
Let $a\in L(c)$, then $c=ab$ for some $b\in G^+$.
Because $c$ is central, $ab=c=bcb^{-1}=b(ab)b^{-1}=ba$.
Therefore $c=ba$, hence $a\in R(c)$.
This means that $L(c)\subset R(c)$.
Similarly, $R(c)\subset L(c)$.

\smallskip(ii)\ \
By (i), $L(c)=R(c)$.
As $\Delta\le_L c$, we have $L(\Delta)\subset L(c)$.
Since $L(\Delta)$ generates $G^+$, so does $L(c)$.
\end{proof}

\begin{proof}[Proof of Theorem~\ref{thm:Gar_struc}]
Suppose that $Z(G)$ is cyclic.
Let $(G, G_1^+, \Delta_1)$ and $(G, G_2^+, \Delta_2)$ be Garside structures on $G$.
Then there exist positive integers $m_1$ and $m_2$ such that
$\Delta_1^{m_1}$ and $\Delta_2^{m_2}$ are central in $G$.
Because $Z(G)$ is cyclic,
$\Delta_1^{m_1}$ and $\Delta_2^{m_2}$ are commensurable,
hence $\Delta_1$ and $\Delta_2$ are commensurable.

\smallskip
Conversely, suppose any pair of Garside elements are commensurable.
Fix a Garside structure $(G, G^+, \Delta)$ on $G$.
Let $m$ be the smallest positive integer such that $\Delta^m$ is central.

We claim that any nonidentity element of $Z(G)$ is commensurable with $\Delta$.
Let $g$ be a nonidentity central element.
Take an integer $k$ such that
$$
k\equiv 0\bmod m\quad\mbox{and}\quad k\ge -\inf(g)+1.
$$
Let $c=\Delta^k g$. Then $c$ is a central element with $\Delta\le_L c$,
hence $c$ is a Garside element by Lemma~\ref{lem:Gar_struc}.
By the hypothesis, $c$ is commensurable with $\Delta$.
As $c$ is central, there exist nonzero integers $p$ and $q$ such that
$\Delta^p=c^q=(\Delta^kg)^{q} = \Delta^{kq}g^q$.
Since $g^q=\Delta^{p-kq}$, $g$ is commensurable with $\Delta$.

It is known that Garside groups are torsion-free
by Dehornoy~\cite{Deh98},
and that every abelian subgroup of a Garside group is
finitely generated by Charney, Meier and Whittlesey~\cite{CMW04}.
Thus $Z(G)$ is torsion-free and finitely generated.
Moreover, by the above claim, any two nonidentity elements of $Z(G)$ are commensurable
because each of them is commensurable with $\Delta$.
These imply that $Z(G)$ is cyclic.
\end{proof}

\section{Roots of periodic elements}\label{sec:roots}

Let $G$ be a Garside group with Garside element $\Delta$.
First, we explore some basic properties of periodic elements regarding translation numbers.

\begin{lemma}\label{lem:per-elt}
Let $g\in G$ be a periodic element. Then the following hold.
\begin{itemize}
\item[(i)] $\LEN(g)=0$, that is, $\INF(g)=\SUP(g)$.
\item[(ii)] $\INF(g^k) = k\cdot\INF(g)$ for all $k\in\Z$.
\item[(iii)]
For any $k\in\Z$, $\lens(g^k)$ is either 0 or 1.

\item[(iv)]
Let $\INF(g)=p/q$ for $p\in\Z$ and $q\in\Z_{\ge 1}$ with $\gcd(p,q)=1$.
Then the following are equivalent for $k\in\Z$:
\begin{itemize}
\item[(a)] $g^k$ is conjugate to a power of $\Delta$;
\item[(b)] $\lens(g^k)=0$;
\item[(c)] $\INF(g^k)$ is an integer;
\item[(d)] $k$ is a multiple of $q$.
\end{itemize}
In particular, $g^q$ is conjugate to $\Delta^p$.
\end{itemize}
\end{lemma}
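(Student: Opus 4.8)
The plan is to prove the four equivalences in part (iv) by establishing a cycle of implications, after first disposing of parts (i)--(iii), which feed into it. For part (i), since $g$ is periodic, some power $g^k$ is conjugate to a power of $\Delta$, say $g^k\sim\Delta^\ell$ with $k\ge 1$; then $\LEN(g^k)=\LEN(\Delta^\ell)=0$ by conjugacy-invariance of $\LEN$ (Proposition~\ref{prop:tran}(i) together with $\len(\Delta^{\ell n})=0$), and since $\LEN(g^k)=k\cdot\LEN(g)$ — which follows from the analogous statement for $\INF$ and $\SUP$ in Proposition~\ref{prop:tran}(ii) — we get $\LEN(g)=0$, i.e.\ $\INF(g)=\SUP(g)$. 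For part (ii): Proposition~\ref{prop:tran}(ii) already gives $\INF(g^k)=k\INF(g)$ for $k\ge 1$, and the case $k=0$ is trivial, so the only new content is $k\le -1$; writing $k=-n$ with $n\ge 1$, I would use $\INF(h^{-1})=-\SUP(h)$ (a standard identity for these translation numbers, or derivable from $\inf(h^{-1})=-\sup(h)$) and part (i) to convert $\INF(g^{-n})=\INF((g^n)^{-1})=-\SUP(g^n)=-n\SUP(g)=-n\INF(g)$.

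For part (iii), I would argue that $\lens(g^k)=\sups(g^k)-\infs(g^k)=\lceil\SUP(g^k)\rceil-\lfloor\INF(g^k)\rfloor$ by Proposition~\ref{prop:tran}(iii), and since $\INF(g^k)=\SUP(g^k)$ by parts (i)--(ii), this is $\lceil x\rceil-\lfloor x\rfloor$ for $x=\INF(g^k)$, which is $0$ if $x\in\Z$ and $1$ otherwise. This simultaneously proves part (iii) and the equivalence (b)$\Leftrightarrow$(c) in part (iv).

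For part (iv), having (b)$\Leftrightarrow$(c) in hand, I would close the cycle as follows. The implication (a)$\Rightarrow$(b): if $g^k$ is conjugate to $\Delta^j$ then $\lens(g^k)=\lens(\Delta^j)=0$ since $\len(\Delta^{j})=0$ on the nose and conjugation cannot decrease $\len$ below the minimum $0$. For (c)$\Leftrightarrow$(d): $\INF(g^k)=k\cdot\INF(g)=kp/q$ with $\gcd(p,q)=1$, so this is an integer iff $q\mid k$. The only implication with real content is (d)$\Rightarrow$(a), equivalently (b)$\Rightarrow$(a): I must show that if $\lens(g^k)=0$ then $g^k$ is actually conjugate to a power of $\Delta$. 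I expect this to be the main obstacle. The idea is that $\lens(g^k)=0$ means some conjugate $h$ of $g^k$ has $\len(h)=0$, i.e.\ $\inf(h)=\sup(h)=:r$, so $\Delta^r\le_L h\le_L\Delta^r$, which forces $h=\Delta^r$; then $g^k\sim h=\Delta^r$. The subtlety is justifying that $\len(h)=0$ implies $h=\Delta^r$: from $\Delta^r\le_L h$ we have $\Delta^{-r}h\in G^+$, and from $h\le_L\Delta^r$ we have $h^{-1}\Delta^r\in G^+$, so $\Delta^{-r}h$ is a positive element whose inverse is also positive, hence equals $1$ by atomicity (a positive element $a$ with $a^{-1}\in G^+$ has $\Vert a\Vert=0$).

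Finally, the last sentence ``In particular, $g^q$ is conjugate to $\Delta^p$'' is not merely a restatement of (a)$\Leftrightarrow$(d) with $k=q$ — that only gives $g^q\sim\Delta^j$ for \emph{some} $j$ — so I need to pin down $j=p$. This follows by comparing translation numbers: $\INF(g^q)=q\cdot\INF(g)=q\cdot(p/q)=p$, while $\INF(\Delta^j)=j$, and $\INF$ is conjugacy-invariant, so $j=p$. This is a short but necessary extra step, and I would present it explicitly rather than leaving it to the reader.
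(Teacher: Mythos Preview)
Your proof is correct and follows essentially the same approach as the paper: the same use of $\LEN(g^k)=k\LEN(g)$ for (i), the same reduction of (ii) to the identity $\INF(h^{-1})=-\SUP(h)$ combined with (i), the same computation $\lens(g^k)=\lceil kp/q\rceil-\lfloor kp/q\rfloor$ for (iii), and the same chain of equivalences and final comparison of $\INF$ values to pin down $j=p$ in (iv). The only difference is that you spell out the implication (b)$\Rightarrow$(a) (that $\len(h)=0$ forces $h=\Delta^r$), which the paper simply declares ``obvious''; your extra detail is fine, though note it tacitly uses that $\infs$ and $\sups$ are realized simultaneously by some conjugate (non-emptiness of the super summit set), a standard Garside fact.
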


\begin{proof}
(i)\ \
Since $g^k=\Delta^\ell$ for some $k\in\Z_{\ge 1}$ and $\ell\in\Z$,
$$
\LEN(g)=\frac1k\cdot\LEN(g^k)=\frac1k\cdot\LEN(\Delta^\ell)
=\frac1k\cdot 0=0.
$$

\smallskip(ii)\ \
We know that $\INF(g^k) = k\cdot\INF(g)$ holds for all $k\ge 0$.
Let $k<0$, then $k=-\ell$ for some $\ell\ge 1$.
Since $\INF(g)=\SUP(g)$ by (i)
and $\INF(h^{-1})=-\SUP(h)$ for all $h\in G$,
we have
$$
\INF(g^k) =\INF((g^{-1})^\ell)
=\ell\cdot\INF(g^{-1})
=\ell\cdot(-\SUP(g))=k\cdot\INF(g).
$$

\smallskip(iii) \ \
Let $\INF(g)=p/q$ for $p\in\Z$ and $q\in\Z_{\ge 1}$ with $\gcd(p,q)=1$.
Choose any $k\in\Z$.
Because $\SUP(g)=\INF(g)=p/q$ by (i),
\begin{equation}
\label{eq:lens}
\lens(g^k)
=\sups(g^k)-\infs(g^k)
=\lceil \SUP(g^k)\rceil -\lfloor\INF(g^k)\rfloor
=\lceil kp/q\rceil -\lfloor kp/q\rfloor
\end{equation}
by (ii). Therefore $\lens(g^k)$ is either 0 or 1.

\smallskip (iv) \ \
It is obvious that $\lens(g^k)=0$ if and only if $g^k$ is conjugate
to a power of $\Delta$.
By Eq.~(\ref{eq:lens}),
$\lens(g^k)=0$
if and only if $\INF(g^k)=kp/q$ is an integer.
Because $p$ and $q$ are coprime,
$kp/q$ is an integer if and only if $k$ is a multiple of $q$.
Therefore the four conditions---(a), (b), (c) and (d)---are equivalent.

By (a) and (d), $g^q$ is conjugate to $\Delta^\ell$ for some
integer $\ell$, hence $\INF(g)=\ell /q$.
Because $\INF(g)=p/q$ by the hypothesis, we have $\ell=p$.
\end{proof}

Theorem 5.1 in \cite{LL08} shows that for every $g\in G$,
$$\begin{array}{rcl}
\INF(g)&=&\max\{\infs(g^k)/k : k=1,\ldots, \Vert\Delta\Vert\},\\
\SUP(g)&=&\min\{\sups(g^k)/k : k=1,\ldots, \Vert\Delta\Vert\}.
\end{array}
$$
Hence the values of $\INF(g)$, $\SUP(g)$ and $\LEN(g)$
can be computed explicitly.

\begin{remark}
If $g$ is an element of $G$ with $\LEN(g)=0$, then $\INF(g)=\SUP(g)=p/q$
for some $p\in\Z$ and $q\in\Z_{\ge 1}$.
Hence $\infs(g^q)=\sups(g^q)=p$, which implies that $g$ is periodic.
Therefore we can see that $g$ is periodic if and only if $\LEN(g)=0$.
Further, $g$ is $p/q$-periodic if and only if $\INF(g)=\SUP(g)=p/q$
for $p\in\Z$, $q\in\Z_{\ge 1}$ with $\gcd(p,q)=1$.
\end{remark}

\smallskip

The following is the main result of this section,
which is a consequence of Lemma~\ref{lem:per-elt}(iv).

\begin{theorem}\label{thm:unique}
Let $G$ be a Garside group with Garside element $\Delta$, and let $g\in G$
and $a, b, k \in\Z_{\neq 0}$.
\begin{itemize}
\item[(i)]
If\/ $g^{kb}$ is conjugate to $\Delta^{ka}$,
then $g^b$ is conjugate to $\Delta^a$.

\item[(ii)]
If\/ each of $g^{a}$ and $g^{b}$ is conjugate to a power of $\Delta$,
then so is $g^{\gcd(a,b)}$.
\end{itemize}
\end{theorem}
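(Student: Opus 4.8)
The plan is to obtain both statements as arithmetic consequences of Lemma~\ref{lem:per-elt}(iv), with the translation number $\INF$ serving as the main bookkeeping tool. The first step, common to both parts, is to note that $g$ is periodic. Indeed, in part (i) the element $\Delta^{ka}$ is either a nontrivial power of $\Delta$ --- so $g$ is commensurable with $\Delta$ by definition, hence periodic --- or $\Delta^{ka}=1$, in which case $g^{kb}$ is conjugate to $1$, so $g^{kb}=1$ and $g=1$ by torsion-freeness of Garside groups; the same dichotomy, applied to a power $\Delta^{c}$ of $\Delta$ to which $g^{a}$ is conjugate, handles part (ii). In the periodic case we may therefore write $\INF(g)=p/q$ with $p\in\Z$, $q\in\Z_{\ge1}$ and $\gcd(p,q)=1$ (Proposition~\ref{prop:tran}(iv)), and Lemma~\ref{lem:per-elt}(iv) applies; the case $g=1$ is trivial for both parts.

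For part (i), I would apply $\INF$ to the hypothesis that $g^{kb}$ is conjugate to $\Delta^{ka}$. By conjugacy invariance of $\INF$ (Proposition~\ref{prop:tran}(i)), the identity $\INF(g^{n})=n\cdot\INF(g)$ valid for all $n\in\Z$ (Lemma~\ref{lem:per-elt}(ii)), and $\INF(\Delta^{n})=n$,
$$
k\cdot\frac{bp}{q}=\INF(g^{kb})=\INF(\Delta^{ka})=ka ,
$$
so $bp/q=a$ after cancelling $k\neq0$. Since $\gcd(p,q)=1$, this forces $q\mid b$, so by the implication (d)$\Rightarrow$(a) of Lemma~\ref{lem:per-elt}(iv) the element $g^{b}$ is conjugate to some power $\Delta^{c}$. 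Comparing $\INF$ once more gives $c=\INF(g^{b})=bp/q=a$, i.e.\ $g^{b}$ is conjugate to $\Delta^{a}$. (Alternatively, from $q\mid b$ write $b=qs$; then $g^{b}=(g^{q})^{s}$ is conjugate to $(\Delta^{p})^{s}=\Delta^{ps}$, and $ps=bp/q=a$.)

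For part (ii), Lemma~\ref{lem:per-elt}(iv) applied to the exponents $a$ and $b$ gives $q\mid a$ and $q\mid b$, hence $q\mid\gcd(a,b)$; then the implication (d)$\Rightarrow$(a) of the same lemma, applied to the exponent $\gcd(a,b)$, shows that $g^{\gcd(a,b)}$ is conjugate to a power of $\Delta$, in fact to $\Delta^{p\gcd(a,b)/q}$.

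I do not anticipate a real obstacle: once periodicity of $g$ is in hand, everything is elementary number theory involving the reduced denominator $q$ of $\INF(g)$. The points that need a little care are the initial reduction to the periodic case (in particular disposing of $g=1$), using the two-sided version $\INF(g^{n})=n\cdot\INF(g)$ from Lemma~\ref{lem:per-elt}(ii) rather than only the $n\ge1$ statement of Proposition~\ref{prop:tran}(ii), and the observation $\INF(\Delta^{n})=n$; the rest is formal.
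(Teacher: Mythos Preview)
Your proposal is correct and follows essentially the same route as the paper: both reduce to Lemma~\ref{lem:per-elt}(iv) by writing $\INF(g)=p/q$ in lowest terms, then in (i) deduce $a/b=p/q$ (hence $q\mid b$, giving the conclusion) and in (ii) use $q\mid a$, $q\mid b$ to get $q\mid\gcd(a,b)$. Your write-up is slightly more explicit about the periodicity step and the use of $\INF(g^{n})=n\cdot\INF(g)$ for all $n\in\Z$, but the argument is the same.
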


\begin{proof}
The hypothesis in either case of (i) or (ii) implies that $g$ is periodic.
Let $\INF(g)=p/q$ for $p\in\Z$ and $q\in\Z_{\ge 1}$ with $\gcd(p,q)=1$.
Then $g^{q}$ is conjugate to $\Delta^{p}$ by Lemma~\ref{lem:per-elt}(iv).

\smallskip

(i)\ \
Since $g^{kb}$ is conjugate to $\Delta^{ka}$, one has
$\INF(g)=a/b=p/q$, hence there is $d\in\Z_{\neq 0}$ such that $a=dp$ and $b=dq$.
Therefore $g^b$ is conjugate to $\Delta^a$.

\smallskip

(ii)\ \
By Lemma~\ref{lem:per-elt}(iv), both $a$ and $b$ are multiples of $q$,
hence $\gcd(a,b)$ is a multiple of $q$.
Therefore $g^{\gcd(a,b)}$ is conjugate to some power of\/ $\Delta$.
\end{proof}

Theorem~\ref{thm:unique}(i) implies that
if $g^k=\Delta^{k\ell}$ then $g$ is conjugate to $\Delta^\ell$.
The following example illustrates that
the general statement for the uniqueness of roots up to conjugacy
(i.e. if $g^k = h^k$ for $k\neq 0$ then $g$ is conjugate to $h$)
does not hold in Garside groups, even for periodic elements $g$ and $h$.

\begin{example}\label{ex:nonunique}
Let $G$ be the group defined by
$$
G=\langle x,y\mid x^a=y^a\rangle,\qquad a\ge 2.
$$
It is a Garside group with Garside element
$\Delta=x^a=y^a$~\cite[Example 4]{DP99}.
(Note that $x$ and $y$ are periodic elements and that $\Delta$ is central.)
We claim that
\begin{itemize}
\item[(i)] $x$ and $y$ are not conjugate;
\item[(ii)] there is no Garside structure on $G$
in which $x$ is a Garside element.
\end{itemize}

Because $G/\langle \Delta\rangle =\langle x,y\mid x^a=y^a=1\rangle
=\langle x\mid x^a=1\rangle*\langle y\mid y^a=1\rangle$,
the images of $x$ and $y$ in $G/\langle \Delta\rangle$ are not conjugate.
Therefore $x$ and $y$ are not conjugate.

Assume that there exists a Garside structure on $G$ with Garside element $x$.
Because $x^a=y^a$ and $x$ is a Garside element,
$y$ is conjugate to $x$ by Theorem~\ref{thm:unique}(i).
It is a contradiction to (i).
\end{example}

The above example shows that
\begin{quote}
there is a Garside group with a periodic element $g$
such that there is no Garside structure in which
$g$ is a Garside element.
\end{quote}
Therefore it gives a negative answer to the question of Bessis
stated in \S1.

\section{Finite subgroups of the quotient group $G_\Delta$}\label{sec:central quotient}

Let $G$ be a Garside group with Garside element $\Delta$,
and let $m$ be the smallest positive integer such that
$\Delta^m$ is central in $G$.
Let $G_\Delta$ denote the quotient $G/\myangle{\Delta^m}$.
For an element $g\in G$, let $\bar g$ denote the image of $g$
under the natural projection from $G$ to $G_\Delta$.
The following theorem was proved
by Bestvina~\cite[Theorem 4.5]{Bes99} for
Artin groups of finite type, and then
proved by Charney, Meier and Whittlesey~\cite[Corollary 6.8]{CMW04}
for Garside groups.

\begin{theorem}[\cite{Bes99,CMW04}]
\label{thm:CMW}
The finite subgroups of\/ $G_\Delta$ are, up to conjugacy,
one of the following two types:
\begin{itemize}
\item[(i)]
the cyclic group generated by the image of $\Delta^u a$ in $G_\Delta$
for some $u\in\Z$ and some simple element $a\ne\Delta$
such that if $a\neq 1$, then for some integer $2\le q\le\Vert\Delta\Vert$
$$
\tau^{(q-1)u}(a)\,\tau^{(q-2)u}(a)\cdots \tau^{u}(a)\,a=\Delta ;
$$

\item[(ii)]
the direct product of a cyclic group of type (i) and
$\langle\bar\Delta^k\rangle$ where $\Delta^k$ commutes with $a$.
\end{itemize}
\end{theorem}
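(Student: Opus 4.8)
The plan is to analyze a finite subgroup $H\le G_\Delta$ by first isolating its cyclic constituents with the translation-number machinery, and only afterwards determining how those constituents can be assembled. Since $\bar h$ has finite order in $G_\Delta$ precisely when a lift $h\in G$ is periodic, I would begin with a single finite-order element. Writing $\INF(h)=\SUP(h)=p/q$ with $\gcd(p,q)=1$ and $1\le q\le\Vert\Delta\Vert$ by Lemma~\ref{lem:per-elt} and Proposition~\ref{prop:tran}, the equalities $\infs(h)=\lfloor p/q\rfloor=u$ and $\sups(h)=u+1$ (when $q\ge 2$) show that $h$ is conjugate in $G$ to an element of canonical length at most one, that is, to $\Delta^u a$ for a simple element $a$; such a conjugation in $G$ descends to a conjugation of $\bar h$ in $G_\Delta$. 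When $q=1$ this forces $a=1$, so $\bar h\in\langle\bar\Delta\rangle$; when $q\ge 2$ one has $a\ne 1,\Delta$. Expanding $(\Delta^u a)^q=\Delta^{qu}\,\tau^{(q-1)u}(a)\cdots\tau^{u}(a)\,a$ (proved by a one-line induction using $x\Delta=\Delta\tau(x)$) and comparing with $p=qu+1$ would then produce the displayed telescoping relation.

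The delicate point in this first stage is upgrading a \emph{conjugacy} into an \emph{equality}. Setting $w=\tau^{(q-1)u}(a)\cdots a$, we have $(\Delta^u a)^q=\Delta^{qu}w$, and since $h^q$ is conjugate to $\Delta^{qu+1}$, the positive element $w$ is conjugate to $\Delta$, so $\infs(w)=\sups(w)=1$. Because $\inf(\Delta^{qu}w)=qu+\inf(w)$ and $\sup(\Delta^{qu}w)=qu+\sup(w)$, to conclude $w=\Delta$ I would choose the representative $\Delta^u a$ so that its $q$-th power attains its summit invariants, i.e. $\inf(h^q)=\sup(h^q)=qu+1$; this forces $\inf(w)=\sup(w)=1$, and $\Delta$ is the unique element with $\inf=\sup=1$. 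Exhibiting a representative of canonical length one whose $q$-th power is simultaneously super summit (e.g. a rigid representative) is the technical hurdle here, though for periodic elements this kind of normalization is available.

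The genuine crux is the subgroup structure, and the warning to keep in mind is that lifting does \emph{not} respect commutation: $\bar g,\bar h$ commuting in $G_\Delta$ only gives $gh=hg\,\Delta^{mk}$ in $G$, so the preimage $\tilde H$ of $H$ is a central extension $1\to\langle\Delta^m\rangle\to\tilde H\to H\to 1$ with $\tilde H$ torsion-free and virtually $\Z$, rather than an abelian group one can read off directly. Proving that $H$ is abelian of rank at most two is where the translation-number tools give out, and I would supply it by the geometric argument of Bestvina and Charney--Meier--Whittlesey: $G$ acts on the contractible normal-form (CAT(0)-type) complex built from the lattice of simple elements, the finite group $H$ must fix a point, and the local combinatorics at a fixed vertex forces $H$ to be abelian of rank $\le 2$. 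Granting this, the final step is purely algebraic: if $H$ is cyclic it is of type (i) by the first stage; if $H$ has rank two, one generator realizes the $q\ge 2$ direction and yields the type-(i) factor $\langle\overline{\Delta^u a}\rangle$, while the complementary direction is generated by a power $\bar\Delta^{k}$, and commutativity of the two generators forces $\tau^{k}(a)=a$, i.e. $\Delta^{k}$ commutes with $a$, which is exactly type (ii). I expect the abelianness-and-rank bound to be the main obstacle, precisely because it concerns how two periodic elements interact modulo $\langle\Delta^m\rangle$, a question invisible to the invariants $\INF$ and $\SUP$ of the individual elements.
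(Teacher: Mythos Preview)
The paper does not prove Theorem~\ref{thm:CMW}; it is quoted verbatim from \cite{Bes99,CMW04} and used as a black box (the sentence preceding the theorem says exactly this, and no proof or sketch follows the statement). So there is no ``paper's own proof'' to compare your proposal against.

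That said, your outline is broadly faithful to how the cited results go. The single-element analysis via $\INF$, $\SUP$, and the summit invariants is a reasonable rederivation of the type~(i) normal form; you correctly flag that obtaining the exact equality $\tau^{(q-1)u}(a)\cdots a=\Delta$ (not merely a conjugacy) requires choosing a representative whose $q$-th power is super summit, and that this is the technical point. You also correctly identify that the genuine content---that a finite subgroup of $G_\Delta$ is abelian of rank at most two---is \emph{not} accessible by translation-number methods and needs the nonpositive-curvature/fixed-point argument on the normal-form complex from \cite{Bes99,CMW04}. Since you explicitly import that step from the very references the theorem cites, your proposal is less an alternative proof than an annotated reading of the original one. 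One small correction: in your final paragraph you write that the complementary generator is a power $\bar\Delta^{k}$; the theorem only asserts this after the whole subgroup has been conjugated so that the first generator is exactly $\overline{\Delta^{u}a}$, so the rank-two decomposition and the commutation condition $\tau^{k}(a)=a$ must be argued for the conjugated subgroup, not the original $H$.
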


In the case of Artin groups of finite type,
Bestvina showed that finite subgroups of $G_\Delta$ are all cyclic groups
(hence they are of type (i) in the above theorem).
Using the following lemma, we show in Theorem~\ref{thm:cyclic}
that the same is true for Garside groups.

\begin{lemma}\label{lem:tinf}
Let $H$ be an abelian subgroup of $G$ which consists of periodic elements.
Then  $\INF|_{H}: H\to \Q$ is a monomorphism.
In particular, $H$ is a cyclic group.
\end{lemma}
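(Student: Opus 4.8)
The plan is to prove that $\INF|_{H}$ is an injective group homomorphism from $H$ into $(\Q,+)$ whose image has uniformly bounded denominators, and then to note that any such subgroup of $\Q$ is cyclic. The only delicate point is showing additivity of $\INF$ on $H$; everything after that is routine.

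To see that $\INF|_{H}$ is a homomorphism, fix $g,h\in H$. Since $H$ is abelian, $(gh)^{n}=g^{n}h^{n}$ for all $n\ge 1$. I would use the standard estimates $\inf(ab)\ge\inf(a)+\inf(b)$ and $\sup(ab)\le\sup(a)+\sup(b)$, valid for all $a,b\in G$ because $\tau$ is an automorphism of $G$ fixing $\Delta$ and hence preserving $G^{+}$. Applying them with $a=g^{n}$, $b=h^{n}$, dividing by $n$, and letting $n\to\infty$ yields
$$
\INF(gh)\ \ge\ \INF(g)+\INF(h)
\qquad\text{and}\qquad
\SUP(gh)\ \le\ \SUP(g)+\SUP(h).
$$
Now $g$, $h$ and $gh$ all lie in $H$ and are periodic, so Lemma~\ref{lem:per-elt}(i) gives $\INF=\SUP$ on each of them, and therefore
$$
\INF(gh)\ \ge\ \INF(g)+\INF(h)\ =\ \SUP(g)+\SUP(h)\ \ge\ \SUP(gh)\ =\ \INF(gh).
$$
Hence all these inequalities are equalities, so $\INF(gh)=\INF(g)+\INF(h)$; combined with $\INF(g^{-1})=-\INF(g)$ from Lemma~\ref{lem:per-elt}(ii), this shows $\INF|_{H}\colon H\to(\Q,+)$ is a homomorphism, the image being contained in $\Q$ by Proposition~\ref{prop:tran}(iv).

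Next I would check injectivity. If $g\in H$ satisfies $\INF(g)=0$, then writing $\INF(g)=p/q$ in lowest terms forces $p=0$ and $q=1$, so Lemma~\ref{lem:per-elt}(iv) applied with $k=q=1$ shows that $g=g^{q}$ is conjugate to $\Delta^{p}=1$, whence $g=1$. Thus $\ker(\INF|_{H})$ is trivial. Finally, by Proposition~\ref{prop:tran}(iv) every value of $\INF$ is a rational whose reduced denominator is at most $\Vert\Delta\Vert$, so $\INF(H)\subseteq\tfrac{1}{L}\Z$ with $L=\operatorname{lcm}(1,2,\dots,\Vert\Delta\Vert)$. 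Since $\tfrac{1}{L}\Z$ is infinite cyclic, its subgroup $\INF(H)$ is cyclic, and hence so is $H\cong\INF(H)$. As noted, the heart of the argument is the first step: commutativity lets one pass to $n$-th powers, after which the coincidence $\INF=\SUP$ on periodic elements upgrades the two one-sided inequalities into genuine additivity.
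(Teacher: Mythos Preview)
Your proof is correct, and the overall architecture---show $\INF|_H$ is an injective homomorphism into a discrete subgroup of $\Q$---matches the paper's. The injectivity and discreteness arguments are essentially identical to the paper's.

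Where you differ is in the proof of additivity. The paper argues algebraically: for $h_i\in H$ with $\INF(h_i)=p_i/q_i$, Lemma~\ref{lem:per-elt}(iv) gives that $h_i^{q_i}$ is conjugate to $\Delta^{p_i}$, so $h_i^{q_im}=\Delta^{p_im}$ since $\Delta^m$ is central; commutativity then yields $(h_1h_2)^{q_1q_2m}=\Delta^{m(p_1q_2+p_2q_1)}$, from which $\INF(h_1h_2)=p_1/q_1+p_2/q_2$ is read off directly. You instead use the lattice inequalities $\inf(ab)\ge\inf(a)+\inf(b)$ and $\sup(ab)\le\sup(a)+\sup(b)$, pass to the limit along powers (using commutativity to write $(gh)^n=g^nh^n$), and then squeeze using $\INF=\SUP$ on periodic elements. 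Your route avoids any explicit manipulation with $\Delta^m$ and is arguably more conceptual, highlighting that additivity comes from the collapse $\LEN=0$; the paper's route is more concrete and makes the value $\INF(h_1h_2)$ visible as an honest exponent of $\Delta$. Both are short and self-contained.
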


\begin{proof}
Let $h_1,h_2\in H$ with $\INF(h_i)=p_i/q_i$ for $i=1,2$.
Because $h_i^{q_i}$ is conjugate to $\Delta^{p_i}$ (by Lemma~\ref{lem:per-elt})
and $\Delta^m$ is central, one has $h_i^{q_im}=\Delta^{p_im}$ for $i=1,2$.
Therefore
$$
(h_1h_2)^{q_1q_2m}
=(h_1^{q_1m})^{q_2} \cdot (h_2^{q_2m})^{q_1}
=\Delta^{p_1mq_2}\Delta^{p_2mq_1}
=\Delta^{m(p_1q_2+p_2q_1)},
$$
hence
$\INF(h_1h_2) = m(p_1q_2+p_2q_1) / q_1q_2m = p_1/q_1+p_2/q_2 =\INF(h_1)+\INF(h_2)$.
This means that $\INF|_{H}: H\to \Q$ is a homomorphism.
If $h\in H$ and $\INF(h)=0$, then
$h$ is conjugate to $\Delta^0=1$ by Lemma~\ref{lem:per-elt}, hence
$h=1$.
This means that $\INF|_{H}: H\to \Q$ is injective.

Notice that, for all $g\in G$, $\INF(g)$ is rational of the form $p/q$ with $1\le q\le\Vert\Delta\Vert$
(see Proposition~\ref{prop:tran}).
Therefore $\INF(H)$ is a discrete subgroup of $\Q$, hence it is a cyclic group.
Because  $\INF|_{H}: H\to \Q$ is injective,
$H$ is also a cyclic group.
\end{proof}

\begin{theorem}\label{thm:cyclic}
Let $G$ be a Garside group with Garside element $\Delta$.
Then every finite subgroup of\/ $G_\Delta$ is cyclic.
\end{theorem}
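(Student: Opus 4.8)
The plan is to reduce the theorem, via Theorem~\ref{thm:CMW}, to the assertion that every \emph{finite abelian} subgroup of $G_\Delta$ is cyclic, and then to establish that assertion by lifting such a subgroup back up to $G$ and applying Lemma~\ref{lem:tinf}. Since, by Theorem~\ref{thm:CMW}, every finite subgroup of $G_\Delta$ is up to conjugacy of type (i) or (ii) and both of these are abelian, every finite subgroup of $G_\Delta$ is in fact abelian, so it suffices to deal with the abelian case.

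So let $\bar H\le G_\Delta$ be finite abelian, let $\pi\colon G\to G_\Delta$ denote the natural projection, and set $H=\pi^{-1}(\bar H)$. Then $H$ is a subgroup of $G$ containing $\langle\Delta^m\rangle$, and $\pi$ restricts to a central extension $1\to\langle\Delta^m\rangle\to H\to\bar H\to 1$: the kernel is central because $\Delta^m\in Z(G)$, and it is infinite cyclic because Garside groups are torsion-free. Moreover every $h\in H$ is periodic, since its image $\bar h$ lies in the finite group $\bar H$ and hence has finite order in $G_\Delta$, which is equivalent to $h$ being periodic.

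The only step requiring real work is to show that $H$ is abelian. For this I would use the commutator pairing of the central extension: for $x,y\in H$ the commutator $[x,y]=xyx^{-1}y^{-1}$ lies in $\langle\Delta^m\rangle$ because $\bar H$ is abelian, and, being central, it depends only on the images $\bar x,\bar y$. Because $\bar H$ is abelian and $\langle\Delta^m\rangle$ is central, the resulting map $\bar H\times\bar H\to\langle\Delta^m\rangle$ is biadditive; hence $[x,y]^{\,|\bar H|}=[x^{|\bar H|},y]$, and this is trivial since $x^{|\bar H|}\in\langle\Delta^m\rangle$ is central. As $\langle\Delta^m\rangle\cong\Z$ is torsion-free this forces $[x,y]=1$, so $H$ is abelian.

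Finally, $H$ is an abelian subgroup of $G$ consisting of periodic elements, so Lemma~\ref{lem:tinf} applies and $H$ is cyclic; consequently $\bar H=H/\langle\Delta^m\rangle$ is cyclic, being a quotient of a cyclic group. I expect the verification that $H$ is abelian to be the main obstacle, everything else being essentially bookkeeping once Theorem~\ref{thm:CMW} and Lemma~\ref{lem:tinf} are at hand.
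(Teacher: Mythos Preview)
Your proof is correct and follows the same overall scaffold as the paper's: use Theorem~\ref{thm:CMW} to reduce to the abelian case, pull back to the preimage $H$ in $G$, check that $H$ is an abelian group of periodic elements, and then apply Lemma~\ref{lem:tinf}. The one substantive difference is in how you establish that $H$ is abelian. The paper reads this off directly from the explicit description in Theorem~\ref{thm:CMW}: up to conjugacy $K$ is generated by the images of $\Delta^u a$ and (possibly) $\Delta^k$ with $\Delta^k$ commuting with $a$, so the preimage is generated by elements of $G$ that already commute in $G$. Your commutator-pairing argument instead uses only the weaker consequence that $K$ is abelian, together with torsion-freeness of the central kernel $\langle\Delta^m\rangle\cong\Z$; this is a touch more work, but it is self-contained and would apply verbatim to any central extension of a finite abelian group by $\Z$, without needing the precise shape of the generators supplied by Theorem~\ref{thm:CMW}.
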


\begin{proof}
Let $K$ be a finite subgroup of $G_\Delta$.
Let $H$ be the preimage of $K$ under the natural projection $G\to G_\Delta$.
Notice that, by Theorem~\ref{thm:CMW},
$H$ is abelian and every element of $H$ is periodic.
By Lemma~\ref{lem:tinf}, $H$ is a cyclic group,
hence $K$ is cyclic.
\end{proof}

\end{document}